\let\oldmarginpar\marginpar
\renewcommand\marginpar[1]{\oldmarginpar[\raggedleft\footnotesize #1]%
{\raggedright\footnotesize #1}}
\newtheorem{theorem}{Theorem}[section]
\newtheorem{lemma}[theorem]{Lemma}
\newtheorem{corollary}[theorem]{Corollary}
\newtheorem{proposition}[theorem]{Proposition}
\newtheorem{question}[theorem]{Question}
\newtheorem{define}[theorem]{Definition}
\theoremstyle{definition}
\newtheorem{remark}[theorem]{Remark}
\newtheorem{example}[theorem]{Example}
\newcommand{\ZZ}{{\mathbb{Z}}}
\newcommand{\NN}{{\mathbb{N}}}
\newcommand{\QQ}{{\mathbb{Q}}}
\newcommand{\bdy}{{\partial}}
\newcommand{\abs}[1]{{\left\vert #1 \right\vert}}
\newcommand\no[1]{}
\newtheorem*{namedtheorem}{\theoremname}
\newcommand{\theoremname}{testing}
\def\la{\langle}
\def\ra{\rangle}
\def\be { \begin{equation} }
\def\ee { \end{equation} }
\begin{document}

\title[]{Normal and Jones surfaces of knots}

\author[]{Efstratia Kalfagianni}
\address{Department of Mathematics, Michigan State University, E. Lansing, MI, 48824}
\email{kalfagia@math.msu.edu}
\author[]{Christine Ruey Shan Lee}
\address{Department of Mathematics, University of Texas at Austin, Austin, TX 78712 }
\email{clee@math.utexas.edu}
\bigskip

\bigskip

\begin{abstract} We describe a normal surface algorithm that decides whether a knot, with known degree of the colored Jones polynomial, satisfies the Strong Slope Conjecture. 
We also discuss possible simplifications of our
algorithm and state related open questions. We establish  a relation between the Jones period of a knot and the
number of sheets of the surfaces that satisfy the Strong Slope Conjecture (Jones surfaces).  We also present numerical and experimental evidence supporting 
a stronger such relation which we state as an open question.

\end{abstract}

\bigskip

\bigskip
\thanks {\today}
\thanks{Kalfagianni is supported in part by NSF grants DMS--1404754 and  DMS-1708249}
\thanks{Lee is supported in part by NSF grant DMS--1502860}

\maketitle

\section{Introduction}

The Strong Slope Conjecture,   stated by the first named author and Tran in  \cite{Effie-Anh-slope}, refines the Slope Conjecture of Garoufalidis \cite{ga-slope}. It  has made explicit a close relationship between the degrees of the colored Jones polynomial and essential surfaces in the knot complement.  In particular, it implies that the linear terms in the degrees of the colored Jones polynomial detect the unknot. The conjecture predicts that the asymptotics of the degrees determine the boundary slopes and the ratios of the Euler characteristic to the number of  sheets  of essential surfaces in the knot complement. Such surfaces are called
\emph{Jones surfaces} (see Section \ref{sec:conj}). Not much is known about the nature of these Jones surfaces, and it is unclear how they are distinguished from other essential surfaces of the knot
complement. 

Our purpose in this paper is two-fold: 
On one hand we are interested in the information about the topology of Jones surfaces encoded in the {\emph{period}} of the degree of the colored Jones polynomial.
On the  other hand  we are interested in the question of how Jones surfaces behave with respect to {\emph{normal surface}} theory in the knot complement.

We show that there is a relation between 
the number of sheets of a Jones surface, the Euler characteristic and the period of the knot (see Proposition \ref{divide}).
Then,  we present numerical evidence suggesting that  the number of sheets of a Jones surface should divide the period of the knot. See Examples \ref{ex:1}-\ref{ex:pretzel} and Question \ref{lessthanp}. 
We also  examine  the Jones surfaces of knots from the viewpoint of normal surface theory in the knot complement. 
The question that we are concerned with is the following: 
If a knot satisfies the Strong Slope Conjecture, what are the simplest Jones surfaces, in the sense of normal Haken sum decompositions \cite{Haken}?
 In particular can we find Jones surfaces that are {\emph{fundamental}} in the sense of Haken? 
As a result  of our analysis, and using Proposition \ref{divide}, we show that there is an algorithm  to decide whether a  knot, with given asymptotics of the degree of its colored Jones polynomial,  satisfies the Strong Slope Conjecture. 

To state our result, for a knot  $K \subset S^3$ let  $J_{K}(n)$ denote the $n$-th colored Jones polynomial of $K$ and let
$d_+[J_{K}(n)]$ and  $d_-[J_{K}(n)]$ denote  the maximal and minimal degree of $J_{K}(n)$, respectively. See Section 2 for definitions and details.
Our main result is a slightly stronger version  (see Theorem \ref{mainjones }) of the following
\begin{theorem} \label{thm:main} Given a knot $K$ with known $d_+[J_{K}(n)]$ and  $d_-[J_{K}(n)]$, 
there is a normal surface theory algorithm that decides whether $K$ satisfies the Strong Slope Conjecture.
\end{theorem}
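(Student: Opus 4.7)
The plan is to convert the question into a bounded search over normal surfaces in the knot exterior, using the colored Jones data to pin down what must be found and Proposition \ref{divide} to bound how large a surface one must look at.

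First, from the known functions $d_+[J_K(n)]$ and $d_-[J_K(n)]$ I extract, effectively and from finitely many values, the candidate boundary slopes $b_\pm$ (the cluster points of $\{4 d_\pm[J_K(n)]/n^2\}$), the candidate ratios $c_\pm$ equal to the linear-term coefficients, and the Jones period $p$. The Strong Slope Conjecture for $K$ is equivalent to the existence, for each sign, of an essential (orientable or not) surface $S_\pm \subset S^3 \setminus K$ whose boundary slope is $b_\pm$ and which satisfies $-2\chi(S_\pm)/s(S_\pm) = c_\pm$, where $s(S_\pm)$ is the number of boundary components of $S_\pm$. So the algorithmic content is: decide whether such surfaces exist in the knot complement.

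Next I invoke Proposition \ref{divide}, which relates $s$, $\chi$ and $p$ for any Jones surface. Combined with the specified ratio $c_\pm$, this yields an explicit upper bound $N = N(c_\pm, p)$ on the number of sheets, and hence on $-\chi$, of any Jones surface that could witness the conjecture for $K$. This is the decisive step that turns an a priori infinite search into a finite one.

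Now fix a triangulation $\mathcal{T}$ of $S^3\setminus K$ and enumerate its fundamental normal surfaces, a finite list by Haken's theorem. Every normal surface is a nonnegative integer Haken sum $S=\sum n_i F_i$ of fundamental surfaces, and the bound on $-\chi(S)$ coming from the previous step yields an explicit bound on the coefficients $n_i$. I therefore enumerate all normal surfaces of bounded weight; for each candidate I compute the slope on $\partial N(K)$ and the pair $(\negeul(S), s(S))$ directly from its vector, and I apply the Jaco--Oertel algorithm to decide whether it is essential. The algorithm outputs \textsc{yes} if, for each sign, some essential $S_\pm$ in the finite list realizes $(b_\pm, c_\pm)$, and \textsc{no} otherwise.

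The main obstacle is the uniform bound in the second step. Without Proposition \ref{divide}, arbitrarily many Haken summands could in principle be needed and no termination is possible; with it, the sheet number of any Jones surface is controlled by $p$, and the remaining ingredients (fundamental surface enumeration, slope/$\chi$ computation, essentiality testing) are standard pieces of normal surface theory that can be assembled into a single decision procedure.
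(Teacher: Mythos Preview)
There is a genuine gap in your second step. Proposition~\ref{divide} does \emph{not} give an upper bound on the number of sheets of a Jones surface. What it says is that $b$ divides $p^2$ and that $b\abs{\partial S}$ divides $2p^2\chi(S)$. Combined with the Jones condition $\chi(S)=\beta\,b\abs{\partial S}$, the divisibility statement becomes $b\abs{\partial S}\mid 2p^2\beta\,b\abs{\partial S}$, i.e.\ simply $2p^2\beta\in\ZZ$. No bound on $b\abs{\partial S}$ (and hence none on $-\chi(S)$) follows: if $S$ is a Jones surface then so is any finite parallel $nS$, with the same slope and the same ratio $\chi/(b\abs{\partial S})$ but $n$ times as many sheets. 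Even granting a bound on $-\chi(S)$, your enumeration of Haken sums $\sum n_iF_i$ would still not terminate, since fundamental summands with $\chi=0$ (boundary-parallel tori, vertex-linking tori) can appear with arbitrary multiplicity without changing $\chi$.

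The paper circumvents this by exploiting the \emph{linearity} of the Jones condition rather than bounding the surface outright. Writing a minimal Jones surface as a Haken sum of essential fundamental pieces, the quantities $\chi$ and $b\abs{\partial(\cdot)}$ are both additive, so the equation $2p^2\chi(S)-\lambda\,b\abs{\partial S}=0$ (with $\lambda=2p^2\beta\in\ZZ$, which is exactly what Proposition~\ref{divide} supplies) becomes a homogeneous linear Diophantine equation in the multiplicities of the fundamental summands. One then passes to \emph{fundamental solutions} of this single linear equation --- a finite set computable by Hilbert-basis methods --- and checks those (Lemmas~\ref{fundamental} and~\ref{fundamental2}). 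Your bounded-search approach would work if one knew that every Jones slope admits a \emph{characteristic} Jones surface (one whose sheet number divides $p$); the paper records exactly this as the open Question~\ref{lessthanp} and notes in its final remark that an affirmative answer would indeed yield the alternative algorithm you sketch.
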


The paper is organized as follows:
In Section 2 we state the Strong Slope Conjecture and briefly survey the cases where the conjecture has been proven. 
In Section 3, we discuss the relations between the Jones period and the number of sheets of Jones surfaces of knots.
In Section 3 we study Haken sum decompositions of Jones surfaces and we prove Theorem \ref{mainjones } which implies Theorem \ref{thm:main}. We also discuss possible simplifications of our algorithm and state related open questions.

We thank Josh Howie for useful comments and  bringing to our attention an oversight in the proof of Theorem 4.3 in an earlier version of the paper.

\section{Jones slopes and surfaces} \label{sec:conj}
\subsection{Definitions and statements} We recall the definition of the  colored Jones polynomial; for more details the reader is referred to \cite{Li}:
  We first recall the definition of the Chebyshev polynomials of the second kind.
   For $n \ge 0$, the polynomial $S_n(x)$ is defined recursively as follows:
\begin{equation}
\label{chev}
S_{n+2}(x)=xS_{n+1}(x)-S_{n}(x), \quad S_1(x)=x, \quad S_0(x)=1.
\end{equation}

Let $D$ be a diagram of a knot $K$. For an integer $m>0$, let $D^m$ denote the diagram obtained from $D$ 
by taking $m$ parallel copies of $K$. This is the $m$-cable of $D$ using the blackboard framing. If $m=1$ then $D^1=D$. 
Let $\la D^m \ra$ denote the Kauffman bracket of $D^m$. This is a Laurent polynomial over the integers in the variable $t^{-1/4}$, normalized so that $\la \text{unknot} \ra = -(t^{1/2}+t^{-1/2})$. Let $c=c(D)=c_+ + c_-$ denote the crossing number and $w=w(D)=c_+ - c_-$ denote the writhe of $D$. 

For $n>0$, we define 
$$J_K(n):=  ( (-1)^{n-1} t^{(n^2-1)/4} )^w (-1)^{n-1}  \la S_{n-1}(D)\ra, $$
where $S_{n-1}(D)$ is a linear combination of blackboard cables of $D$, obtained via equation \eqref{chev}, and the notation $\la S_{n-1}(D) \ra$ means extend the Kauffman bracket linearly. That is, for diagrams $D_1$ and $D_2$ and scalars $a_1$ and $a_2$, $$\la a_1 D_1 + a_2 D_2 \ra = a_1 \la D_1 \ra + a_2 \la D_2 \ra.$$

For a knot  $K \subset S^3$ let $d_+[J_{K}(n)]$ and  $d_-[J_{K}(n)]$ denote  the maximal and minimal degree of $J_{K}(n)$ in $t$, respectively.

Garoufalidis \cite{ga-quasi}  showed that
the degrees $ d_+[J_{K}(n)] $ and $ d_-[J_{K}(n)] $ are quadratic {\em quasi-polynomials}.
This means that, given a knot $K$, there is $n_K\in \NN$ such that  for all $n>n_K$ we have
 $$ \, d_+[J_{K}(n)] =  a_K(n) n^2 + b_K(n) n  + c_K(n), \ \ \    \, d_-[J_{K}(n)] =  a_K^{*}(n) n^2 + b_K^{*}(n) n  + c_K^{*}(n),$$
 where the coefficients are periodic functions from $\NN $ to $\QQ$ with finite integral period.

\begin{define} {\rm The least common multiple of the periods of all the coefficient functions is called the {\em Jones period} $p$ of $K$.}
\end{define}

For a sequence $\{x_n\}$, let $\{x_n\}'$ denote the set of its cluster points.

\begin{define}  \label{jslopes} {\rm  An element of the sets 
$$js_K:= \left\{ 4n^{-2}d_+[J_K(n)]  \right\}',  \quad
  js^*_K:= \left\{ 4n^{-2}d_-[J_K(n)] \right\}' $$
 is called a {\em Jones slope} of $K$.  
 Also  let
$$jx_K:= \left\{ 2n^{-1}\ell d_+[J_K(n)]  \right\}'=\left\{2b_K(n)\right\}',\ \ \   jx^{*}_K:= \left\{ 2n^{-1}\ell d_-[J_K(n)]  \right\}'=\left\{2b^{*}_K(n)\right\}',$$
where  $\ell d_+[J_K(n)]$  and  $\ell d_-[J_K(n)]$  denote the linear term of $d_+[J_K(n)]$ and $d_-[J_K(n)]$, respectively. }

 \end{define}
 
 Given a knot $K\subset S^3$, let
  $n(K)$ denote a tubular neighborhood of
$K$ and let $M_K:=\overline{ S^3\setminus n(K)}$ denote the exterior of
$K$. Let $\langle \mu, \lambda \rangle$ be the canonical
meridian--longitude basis of $H_1 (\bdy n(K))$.   A properly embedded surface $$(S, \bdy S) \subset (M_K,
\bdy n(K)), $$ is called \emph{essential} if it is $\pi_1$-injective and it is not a boundary parallel annulus.

An element $a/b \in
{\QQ}\cup \{ 1/0\}$ with $\gcd(a, b)=1$ is called a \emph{boundary slope} of $K$ if there is an essential surface $(S, \bdy S) \subset (M_K,
\bdy n(K))$, such that  $\bdy S$ represents $[a \mu + b \lambda] \in
H_1 (\bdy n(K))$.  Hatcher showed that every knot $K \subset S^3$
has finitely many boundary slopes \cite{hatcher}. The {\em Slope Conjecture} \cite[Conjecture 1]{ga-slope} asserts that the Jones slopes of any knot $K$ are 
 boundary slopes.
The {\em  Strong Slope Conjecture} \cite[Conjecture 1.6]{Effie-Anh-slope} asserts that the topology of the surfaces realizing these boundary slopes may be predicted
by  the linear terms of  $d_+[J_K(n)]$, $d_-[J_K(n)]$.
\vskip 0.09in

\noindent { {\bf Strong Slope Conjecture.}}{\emph { Given
a Jones slope of $K$, say
 $a/b\in js_K$, with $b>0$ and $\gcd(a, b)=1$, there is an essential surface $S\subset M_K$ with $\abs{\partial S}$  boundary components such that each component of $\partial S$ has slope $a/b$,  and
$$\frac{\chi(S)}{{\abs{\partial S} b}} \in jx_K.$$
Similarly,  given  $a^{*}/b^{*}\in js^{*}_K$, with $b^{*}>0$ and $\gcd(a^{*}, b^{*})=1$,  there is an essential surface $S^{*}\subset M_K$ with $\abs{\partial S^{*}}$  boundary components such that each component of $\partial S^{*}$ has slope $a^{*}/b^{*}$, and
$$-\frac{\chi(S^{*})}{{\abs{\partial S^{*}} b^{*}}} \in jx^{*}_K.$$}}

\begin{define}  \label{jchar}{\rm
 With the notation as above, a  {\em Jones surface} of  $K$ is an essential surface $S\subset M_K$ such that, either
\begin{itemize} 
\item $\partial S$ represents a Jones slope  $a/b\in js_K$, with $b>0$ and $\gcd(a, b)=1$, and  we have
$$ \frac{\chi(S)}{{\abs{\partial S} b}} \in jx_K; \ \ \  {\rm or}$$
\item$\partial S$ represents a Jones slope $a^{*}/b^{*}\in js^{*}_K$, with $b^{*}>0$ and $\gcd(a^{*}, b^{*})=1$, and we have  $$- \frac{\chi(S)}{\abs{\partial S} b^{*}} \in jx^{*}_K.$$

\end{itemize}

The number  $\abs{\partial S} b$ (or $\abs{\partial S}b^*$) is  called the {\em{number of sheets}} of the Jones surface.}
\end{define}

We note that the  Strong Slope Conjecture implies that the cluster points of the liner terms in the degree of the colored Jones polynomial alone, detect the trivial knot.
More specifically, we have:

\begin{theorem} If the Strong Slopes Conjecture holds for all knots. then the following is true:
A knot $K$ is the unknot if and only if $ js_K=\{1\}$.
\end{theorem}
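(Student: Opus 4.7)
The plan is to prove the biconditional by combining a direct degree computation for the forward direction with an application of the Strong Slope Conjecture (SSC) for the reverse direction. For the forward direction, assume $K$ is the unknot. Substituting into the definition of $J_K(n)$ and using the closed form for the colored Jones polynomial of the unknot (where $\la S_{n-1}(D) \ra$ reduces, after sign correction, to $[n]$), one reads off $d_+[J_K(n)]$ explicitly. Feeding this into the definition $js_K = \{4n^{-2}d_+[J_K(n)]\}'$ then produces the singleton cluster set claimed by the theorem.

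For the reverse direction, assume $js_K = \{1\}$ so that the unique Jones slope of $K$ is $1/1$. The SSC then provides an essential surface $S \subset M_K$ whose boundary components all have slope $1/1$, together with the constraint $\chi(S)/\abs{\partial S} \in jx_K$ tying the topology of $S$ to the linear term of $d_+[J_K(n)]$. An Euler characteristic case analysis now pins down $S$: since $S$ is essential with nonempty boundary, $\chi(S)\le 1$, with equality only if $S$ is a disk; an essential annulus gives $\chi(S)=0$, incompatible with the nonzero ratio dictated by $jx_K$; and any surface of negative Euler characteristic leads to a numerical contradiction with the prescribed sheet/Euler data. Hence $S$ must be a meridian disk, which forces $M_K$ to be a solid torus and $K$ to be the unknot.

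The main obstacle is controlling the topology of the Jones surface in the reverse direction. The SSC only guarantees the existence of some essential Jones surface, so the argument must rule out every alternative to the meridian disk by exploiting the joint information from $js_K$ and $jx_K$ together with standard incompressible surface theory in irreducible knot complements. Once the Jones surface is identified as an essential disk, the classical observation that an essential disk in a knot complement forces the complement to be a solid torus completes the proof.
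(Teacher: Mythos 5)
Your forward direction is sound and matches the paper, which simply invokes the normalization of \cite{Effie-Anh-slope} to conclude $js_K=\{1\}$ for the unknot. The substantive gap is in your reverse direction, specifically in the Euler-characteristic case analysis. You argue that an annulus is "incompatible with the nonzero ratio dictated by $jx_K$" and that negative-$\chi$ surfaces produce "a numerical contradiction with the prescribed sheet/Euler data," but you never explain why $jx_K$ must avoid $0$ and negative values. The hypothesis $js_K=\{1\}$ constrains only the quadratic coefficient $a_K(n)$ of the degree quasi-polynomial; it places no a priori restriction on the linear coefficient $b_K(n)$, which is what $jx_K$ records. Absent additional input, nothing prevents a knot with $js_K=\{1\}$ from having $jx_K\ni 0$ (Jones annulus) or $jx_K$ negative (higher-genus Jones surface), so your argument does not actually pin the Jones surface down to a disk. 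This is the step that would fail.

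For comparison, the paper's proof reaches the same point but handles it by assertion rather than by case analysis: it states directly that, with the normalization of \cite{Effie-Anh-slope}, $js_K=\{1\}$ forces the Jones surface to have boundary slope $0$ and $\chi(S)>0$, and then concludes $S$ is a disk, hence a disk Seifert surface, hence $K$ is trivial. The positivity of $\chi(S)$ is the crux in both arguments; you try to derive it by elimination, which is where your reasoning is incomplete. A minor further slip: you identify the slope of $\partial S$ as $1/1$ and then call $S$ a "meridian disk." With the stated normalization the Jones slope $1$ of the unknot corresponds to the longitude, boundary slope $0$ (as the paper's proof says), and the essential disk in question is a spanning (Seifert) disk, not a meridian disk of the attached solid torus. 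The conclusion "essential disk implies unknot" is of course correct by Dehn's lemma, but the terminology and slope should be reconciled with the paper's conventions.
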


\begin{proof} With the normalization of \cite{Effie-Anh-slope} if $K$ is the unknot we have $ js_K=\{1\}$.
On the other hand if  $js_K=\{1\}$, and the Strong Slopes Conjecture holds for $K$, then we have a Jones surface $S$  for $K$ with boundary slope $0$ and with $\chi(S)>0$.
Then $S$ must be a collection of discs which means that a Seifert surface for  $K$ is a disc  and thus $K$ is the unknot.

\end{proof}


\subsection{What is known}The Strong Slope Conjecture is known for the following knots.
\begin{itemize}

\item  Alternating knots \cite{ga-slope} and adequate knots \cite{FKP, FKP-guts}.
\item  Iterated   torus knots  \cite{Effie-Anh-slope}.
\item Families of 3-tangle pretzel knots \cite{LV} .
\item Knots with up to  9 crossings \cite{ga-slope, Howie17, Effie-Anh-slope}.
\item Graph knots \cite{MoTa}.
\item  An infinite family of arborescent non-Montesinos  knots \cite{Howie-Do}.
\item Near-alternating knots \cite{Lee17} constructed by taking Murasugi sums of an alternating diagram with a non-adequate diagram.
\item Knots obtained by iterated cabling and connect sums of knots from any of the above classes \cite{Effie-Anh-slope, MoTa}.
\end{itemize}
The Slope Conjecture is also known for a family of 2-fusion knots, which is a 2-parameter family $K(m_1, m_2)$ of closed 3-braids, obtained by the $(-1/m_1, -1/m_2)$ Dehn filling on a 3-component link $K$ \cite{GV}.


\section{Jones period and Jones surfaces}

We show that the Strong Slope Conjecture implies a relationship between the number of sheets of a Jones surface for a knot $K$, its Euler characteristic, and the Jones period.

\begin{proposition}\label{divide} Suppose that $K\subset S^3$ is  a knot of Jones period $p$.  Let $a/b \in  js_K\cup  js^{*}_K$ be a Jones slope and  let  $S$ be a corresponding Jones surface.
Then $b$ divides $p^2$ and $b|\partial S|$ divides $2p^2\chi(S)$. 

In particular, if $p=1$ then all the Jones slopes of $K$ are integral and for every  Jones surface we have $\displaystyle{\frac{2\chi(S)}{|\partial S|}}\in \ZZ$. 
\end{proposition}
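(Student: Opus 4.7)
The plan is to convert the quasi-polynomial structure of $d_{\pm}[J_K(n)]$ into integrality constraints on the coefficient functions $a_K, b_K$ by restricting to arithmetic progressions of step $p$. The Jones slope / Jones surface identities
\[
a/b = 4\, a_K(r_1), \qquad \chi(S)/(|\partial S|\, b) = 2\, b_K(r_2),
\]
coming from Definitions \ref{jslopes} and \ref{jchar} (for residues $r_1, r_2 \in \{0,\dots,p-1\}$ realizing the relevant cluster points, not necessarily equal), translate those integrality constraints directly into the claimed divisibilities.

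Fix any residue $r$. On the arithmetic progression $n = r + kp$, the coefficient functions are constant, so
\[
f(k) \;:=\; d_+[J_K(r+kp)] \;=\; a_K(r)(r+kp)^2 + b_K(r)(r+kp) + c_K(r)
\]
is a genuine quadratic polynomial in $k$ whose values lie in $\mathbb{Z}$. The second finite difference $\Delta^2 f(0) = 2\, a_K(r)\, p^2$ is therefore an integer. Applying this to $r = r_1$ and substituting $a_K(r_1) = a/(4b)$ gives $2b \mid a p^2$, and since $\gcd(a,b) = 1$, a short parity case analysis (splitting on whether $a$ or $b$ is even) yields $b \mid p^2$.

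For the second divisibility, the first finite difference $\Delta f(0) = 2 a_K(r) r p + a_K(r) p^2 + b_K(r) p$ is also an integer. Multiplying by $p$ and subtracting the integers $r \cdot (2 a_K(r) p^2)$ and $\tfrac{1}{2}(2 a_K(r) p^2) \cdot p$ isolates $b_K(r) p^2$ up to $\tfrac{1}{2}\mathbb{Z}$, so $4 p^2 b_K(r) \in \mathbb{Z}$. Setting $r = r_2$ and using $\chi(S) = 2\, b_K(r_2) \cdot b \cdot |\partial S|$ we obtain
\[
2 p^2 \chi(S) \;=\; \bigl(4 p^2 b_K(r_2)\bigr) \cdot \bigl(b|\partial S|\bigr),
\]
so $b|\partial S|$ divides $2 p^2 \chi(S)$. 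The $js^{*}_K$ case is handled identically with $d_-$ in place of $d_+$. The particular case $p = 1$ then follows: $b \mid 1$ forces $b = 1$, and $2 \chi(S)/|\partial S| \in \mathbb{Z}$.

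The main technical point I expect to be delicate is the exact integrality of $d_{\pm}[J_K(n)]$ in the paper's normalization. The Kauffman bracket has degrees in $\tfrac{1}{4}\mathbb{Z}$ a priori, so if $d_+[J_K(n)]$ sat only in $\tfrac{1}{4}\mathbb{Z}$ the second-difference argument would yield only $8 a_K(r) p^2 \in \mathbb{Z}$, weakening $b \mid p^2$ to $b \mid 2p^2$ (or worse). Ensuring that the framing factor $((-1)^{n-1} t^{(n^2-1)/4})^w$ in the definition of $J_K(n)$, for a knot as opposed to a link, correctly absorbs the quarter-integer part of $\langle S_{n-1}(D)\rangle$ is the small lemma that must underlie this step; once that is in place, the finite-difference bookkeeping above is routine.
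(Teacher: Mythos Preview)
Your argument is essentially the same as the paper's: both restrict $d_+[J_K(n)]$ to an arithmetic progression of step $p$, use integrality of the values, and take finite differences (equivalently, the paper's $4d_+[J_K(n+p)]-4d_+[J_K(n)]$) followed by a multiplication by $p$ to isolate the $b_K$--term. The only notable deviations are that the paper cites \cite[Lemma~1.10]{ga-slope} for $b\mid p^2$ rather than deriving it from the second difference, and the paper tacitly takes the slope value $a/b$ and the $jx_K$--value to arise from the same residue class $n\bmod p$, whereas you carefully allow distinct residues $r_1,r_2$; your version is in this respect slightly more general. Your flagged concern about $d_\pm[J_K(n)]\in\mathbb{Z}$ is the same hypothesis the paper invokes without further comment.
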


\begin{proof}

Suppose, for notational simplicity,  that  $a/b \in  js_K$ and thus $S$ corresponds to the highest degree  $4d_+[J_{K}(n)] =  4a_K(n) n^2 + 4b_K(n) n  + 4c_K(n)$ for some fixed $n>n_K$ with respect to ${a}/{b}$.  The case
 $a/b \in js^{*}_K$ is completely analogous.
 
The claim that  $b$ divides $p^2$  is shown in \cite[Lemma 1.10]{ga-slope}.
By the above discussion we can assume that for some fixed $n >n_K$ with respect to ${a}/{b}$ and for every integer $m> 0$ we have
$$4a_K(n)=4a_K(n+mp)=\frac{a}{b} \ \ {\rm and} \ \  4b_K(n)= 4b_K(n+mp) = \frac{2\chi(S)}{{\abs{\partial S} b}},$$
while $4c_K(n+mp) = 4c_K(n).$
Furthermore we have $d_+[J_{K}(n)] $ and $d_+[J_{K}(n+mp)]$ are integers for all $m> 0$ as above.
If $\abs{\partial S} b=1$ then there is nothing to prove. Otherwise, set $m=1$ and consider
$4d_+[J_{K}(n+mp)] -4d_+[J_{K}(n)].$ 
We have
\begin{align*}
4d_+[J_{K}(n+p)] -4d_+[J_{K}(n)] &= \frac{2anp|\partial S| + ap^2|\partial S| + 2p\chi(S)}{b|\partial S|} \\ 
&= \frac{2anp}{b}+ \frac{ap^2}{b} + \frac{2p\chi(S)}{b|\partial S|}, 
\end{align*}
which must be an integer. Since $b$ divides $p^2$ , the term $ {ap^2}/{b}$ is an integer. We conclude that 
$$\frac{2anp}{b}+\frac{2p\chi(S)}{b|\partial S|}$$ 
is an integer. Multiplying the last quantity by $p$ also gives an integer; thus
$$\frac{2anp^2}{b} +\frac{2p^2\chi(S)}{b|\partial S|}$$ 
is an integer. But since $2{anp^2}/{b} $ is an integer, we have that ${2p^2\chi(S)}/{b|\partial S|}$ is an integer, and the conclusion follows that $b|\partial S|$ divides  $2p^2\chi(S)$.
\end{proof}

It turns out that for all knots where the Strong Slope Conjecture is known and the Jones period is calculated, for each Jones slope we can find a Jones surface where the number of sheets $b|\partial S|$  actually divides the Jones period. This leads us to give the following definition. 
\begin{define}\label{characteristic} {\rm
We call a Jones surface $S$ of a knot $K$ {\emph {characteristic}} if the number of sheets of $S$ divides the Jones period of $K$. }
\end{define}

\begin{example} \label{ex:1} An adequate knot has Jones period equal to 1, two  Jones  slopes and two corresponding
Jones surfaces each with a single boundary component \cite{FKP}.
By the proof of \cite[Theorem 3.9]{Effie-Anh-slope}, this property also holds for iterated cables of adequate knots. Thus in all the cases, we can find characteristic Jones surfaces.
Note, that for adequate knots the characteristic Jones surfaces are spanning surfaces that are often non-orientable. In these cases the orientable double cover is also a Jones surface
but it is no longer characteristic since it has two boundary components. 
\end{example}

\begin{table}[ht]

\begin{tabular}{|c|c|c|c|c|c|c|c|c|c|c|}
\hline
Knot & $js_K$ & $|\partial S|$ & $\chi(S)$ & $b|\partial S|$  & $js^*_K$ & $|\partial S^*|$ & $\chi(S^*)$ & $b^*|\partial S^*|$ & $p$\\ 
\hline
$8_{19}$ &\{12\} & 2 & 0 & 2 & \{0\} & 1 & -5 & 1 & 2\\  
\hline
$8_{20}$ &\{8/3\} & 1& -3 &3 & \{-10 \} & 1 & -4 & 1 & 3\\ 
\hline
$8_{21}$ & \{1\} & 2 & -4 & 2 & \{-12\} & 1 & -3 & 1 & 2 \\ 
\hline
$9_{42}$  & \{6\} & 2 & -2& 2 & \{-8\} & 1 & -5 & 1 & 2\\
\hline 
$9_{43}$  & \{32/3\} & 1 & -3 & 3 & \{-4\} & 1 & -5 & 1 & 3 \\ 
\hline
$9_{44}$  & \{14/3\} & 1 & -6 &3 & \{-10\} & 1 & -4 & 1  & 3\\
\hline 
$9_{45}$ & \{1\} & 2 & -4 & 2 & \{-14\} & 1 & -4 & 1 & 2 \\  
\hline
$9_{46}$ & \{2\} & 2 & -2 & 2 & \{-12\} & 1 & -5 & 1 & 2 \\ 
\hline 
$9_{47}$ & \{9\} & 2 & -4& 2 & \{-6\} & 1 & -4 & 1 & 2 \\ 
\hline 
$9_{48}$ & \{11\} & 2 & -6 & 2 &\{-4\} & 1 & -3 & 1 & 2 \\ 
\hline 
$9_{49}$ & \{15\} & 2 & -6 & 2 & \{0\}& 1 & -3 & 1 & 2  \\
\hline
\end{tabular} 
\vskip 0.1in
\caption{Eight and nine crossing non-alternating knots}\end{table}
\begin{example}For each non-alternating knot up to nine crossings, Table 1  gives the Jones period,  the Jones slopes, and the numbers of sheets of corresponding characteristic  Jones surfaces. 

The Jones slopes and Jones period in the table  are compiled from \cite{ga-slope}. The Jones surface data for all examples,   but $9_{47}$ and $9_{49}$, are obtained from \cite{Effie-Anh-slope}. 
The proof that the knots $9_{47}$ and $9_{49}$ satisfy the Strong Slope Conjecture was recently done by Howie \cite{Howie17}. 

\end{example}

\begin{example} 
 By \cite{Effie-Anh-slope}, the Jones slopes of 
a $(p, q)$-torus knot $K = T(p, q)$ are $pq$ and $0$,  with Jones surfaces an annulus and a minimum genus Seifert surface, respectively.
The Jones period of $K$ is 2 and thus both Jones surfaces are characteristic.
By the proof of \cite[Theorem 3.9]{Effie-Anh-slope}, this property also holds for iterated torus knots.
\end{example} 

\begin{example} \label{ex:pretzel} Consider the pretzel knot $K=P(1/r, 1/s, 1/t)$ where $r, s, t$ are odd, $r < 0$, and $s, t > 0$ . If $2|r|<s$ and $2|r|<t$,  then we can find Jones surfaces which are spanning surfaces of $K$ \cite{LV} for each Jones slope. For each Jones surface the numbers of sheets is 1, which clearly divides the Jones period $p=1$ of $K$.

If $|r| > s$ or $|r| > t$, the Jones period is equal to $\displaystyle{p= \frac{-2+s+t}{2}}$, the Jones slopes are given by $js_K=\{\displaystyle{2\left(\frac{1-st}{-2+s+t} - r\right)} \} $ and 
$js_K^* = \{-2(s+t)\}$. A Jones surface $S$ with boundary slope equal to $s\in js_K$ has number of sheets the least common multiple of the denominators of two fractions $\displaystyle{-\frac{1-st}{-2+s+t}}$ and $\displaystyle{\frac{t-1}{-2+s+t}}$, each reduced to lowest terms. Since $s, t$ are odd, the resulting reduced fractions all have denominators dividing $\displaystyle{p= \frac{-2+s+t}{2}}$. For details on how the fractions are assigned and their relations to the number of sheets of $S$, see \cite{LV}. A Jones surface with boundary slope equal to $s^*\in js_K^{*}$ is a spanning surface of $K$. In both cases the number of sheets divides the period and hence  the Jones surfaces are characteristic.

 For example, the pretzel knot $P(-1/101, 1/35, 1/31)$ has a Jones slope $js_K= \{1345/8\}$ realized by a Jones surface with 32 sheets. The 32 sheets comes from taking the least common multiple of the denominators of the fractions $271/16$ and $15/32$.  This means that the number of boundary components is 4. The Jones period is 32. This is an interesting example where both $b$ and $|\partial S|$ are not equal to 1. Yet another interesting example comes from this family--the pretzel knot $P(-1/101, 1/61, 1/65)$, which has Jones period $p=62$. It has a Jones slope $js_K=\{4280/31\}$ from a Jones surface of 31 sheets (the corresponding reduced fractions are $991/31$ and $16/31$) which divides the Jones period 62, but is not equal to it. 
\end{example} 

We note that currently there are no examples of knots which admit multiple Jones slopes for either $d_+[J_K(n)]$ or $d_-[J_K(n)]$.  That is, in all the known cases the functions $a_K(n), a_K^{*}(n)$ are both constant.
One may ask the following

\begin{question} Are there knots for which  $a_K(n), a_K^{*}(n)$  are not eventually constant functions? That is, is there a knot $K$ that admits multiple Jones slopes 
 for $d_+[J_K(n)]$ or $d_-[J_K(n)]$? 
\end{question}  

The discussion above and examples also raise the following question. 

\begin{question} \label{lessthanp}
Is it true that for every Jones slope of a knot $K$ we can find a characteristic Jones surface?
\end{question}


\smallskip

\section{Haken sums for Jones surfaces} 
In this section we show that there is a normal surface theory algorithm to decide whether a  given knot satisfies the Strong Slope Conjecture.

Here we will briefly recall a few  facts about normal surfaces.
For more background and terminology on normal surface theory, the reader is referred to \cite{MatveevAlgorithmicBook}, \cite{JacoTo}, or the introduction of  \cite{JacoDecisionProblems}. Our notations here closely follow that of \cite{howie}.

Let $M$ be a 3-manifold with a triangulation  $\mathcal T$ consisting of $t$ tetrahedra. A properly embedded surface $S$ is called {\emph{normal}} if for every tetrahedron $\Delta$, 
the intersection $\Delta\cap S$  consists of triangular or quadrilateral discs each  intersecting each edge of the tetrahedron  in at most one point and away from vertices of $\mathcal T$. There are seven normal isotopy classes of normal discs, four are triangular and three are quadrilateral; these are called {\emph{disc types}}. Thus we have total of $7t$ normal disc types in  $\mathcal T$. Fixing an order of these normal discs, $D_1,\ldots, D_{7t}$,
  $S$ is represented by a unique (up to normal isotopy) $7t$-tuple of non-negative integers ${\bf n}(S)= (y_1, \ldots, y_{7t})$, where $y_i$ is the number of the discs $D_i$ contained in 
 $S$.

 Conversely, given a $7t$-tuple of non-negative integers ${\bf n}$, we can impose constraints  on the $y_i$'s so that it represents a unique up to isotopy normal surface in  $\mathcal T$.  These constraints  are known as normal surface equations.

\begin{define} {\rm Two normal surfaces $S_1, S_2$ are called {\em{compatible}} if they do not contain quadrilateral discs of different types. Given compatible normal surfaces $S_1, S_2$
 one can form their {\emph{Haken sum}} $S_1\oplus S_2$: This 
 is a geometric sum along each arc and loop of $S_1\cap S_2$ and it is uniquely determined by the requirement that the resulting surface $S_1\oplus S_2$ be normal in $\mathcal T$.
   See Figure \ref{sum}.}
 \end{define}

\begin{figure}[ht]
\includegraphics[scale=.5]{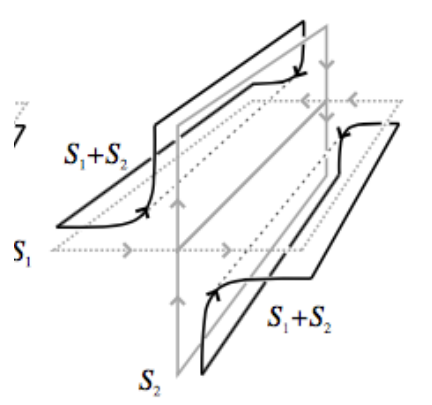}
\hspace{0,3cm} 
 \includegraphics[scale=.5]{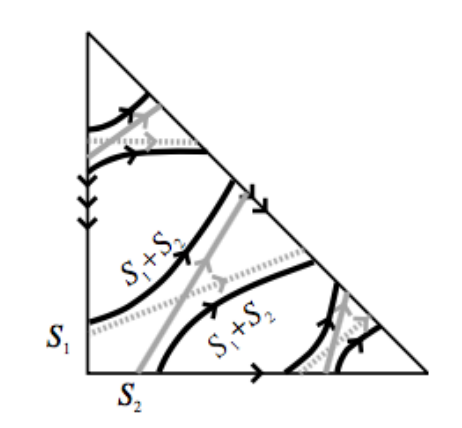}
\caption{The Haken sum of compatible normal surfaces $S_1$ and $S_2$ (left) and the resulting normal curves from $\partial S_1$ and $\partial S_2$ (right).}
\label{sum}
\end{figure}  
 
 If ${\bf n}(S_1)= (y_1, \ldots, y_{7t})$ and  ${\bf n}(S_2)= (y'_1, \ldots, y'_{7t})$, then  $${\bf n}(S_1\oplus S_2)={\bf n}(S_1)+{\bf n}(S_2) = (y_1+y'_1, \ldots, y_{7t}+y'_{7t}),$$
 and  $\chi(S_1\oplus S_2)= \chi(S_1)+\chi(S_2)$.
   
 \begin{define}
 \label{fundamental} 
{\rm  A normal surface $S$ is called \emph{fundamental} if  ${\bf n}(S)$  cannot be
written as a sum of two solutions to the normal surface equations.}
\end{define} 

There are only finitely many fundamental surfaces and there is an algorithm to find all of them. Furthermore, 
all normal surfaces 
can be written as a finite sum of fundamental surfaces \cite{Haken}.

\begin{theorem} \label{mainjones } Given a knot $K$ with known sets $js_K, js^{*}_K, jx_K, jx^{*}_K$ and known Jones period $p$, there is a normal surface theory algorithm that decides whether $K$ satisfies the Strong Slope Conjecture.
\end{theorem}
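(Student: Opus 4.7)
My plan is to reformulate the Strong Slope Conjecture, for the given data, as a finite collection of existence questions about essential normal surfaces in $M_K$, and then combine Proposition~\ref{divide} with Haken's normal surface theory to make each question algorithmically decidable. I fix a triangulation $\mathcal T$ of $M_K$ with $t$ tetrahedra. The input provides the finite sets $js_K, js^*_K$ of Jones slopes and the finite sets $jx_K, jx^*_K$ of admissible ratio values. For each Jones slope $a/b\in js_K$ (in lowest terms with $b>0$), the conjecture demands at least one essential surface $S$ with $\partial S$ of slope $a/b$ and $\chi(S)/(|\partial S|\,b)=\tau/2$ for some $\tau\in jx_K$; the case $js^*_K,jx^*_K$ is analogous. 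So $K$ satisfies the conjecture iff this existence holds for every Jones slope, and for each slope it suffices to test the finitely many target pairs $(a/b,\tau)$ one at a time. In the $7t$-dimensional space of normal surface solutions the boundary slope, Euler characteristic and boundary weight are linear functionals of the normal coordinates; once $a/b$ is fixed, $|\partial S|$ is proportional to the boundary weight, so the ratio condition becomes one further linear equation. The admissible normal surfaces form a rational polyhedral sub-cone $C(a/b,\tau)$ of the solution cone, and I need to decide whether $C(a/b,\tau)$ contains an integer point representing an essential surface.

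Proposition~\ref{divide} supplies the key finiteness tool. Combining $\chi(S)/(|\partial S|\,b)=\tau/2$ with $b|\partial S|\mid 2p^2\chi(S)$ gives the compatibility $p^2\tau\in\ZZ$ and forces $|\partial S|$ to be a positive integer multiple of an effectively computable minimum $n_0=n_0(\tau,b,p)$. Every normal surface in $C(a/b,\tau)$ is a non-negative integer combination of the finitely many fundamental normal surfaces lying in $C(a/b,\tau)$, and in such a Haken sum both $\chi$ and $|\partial|$ are additive, so the prescribed ratio $\tau/2$ must arise as a convex combination of the $\chi(F_i)/|\partial F_i|$ of the constituent fundamentals. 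A Carath\'eodory-type argument then reduces the search to Haken sums of a bounded number of fundamentals with coefficients bounded in terms of $n_0$ and the denominators of the $\chi(F_i)/|\partial F_i|$. For each such candidate I apply the classical algorithmic tests for $\pi_1$-injectivity and non-boundary-parallelism of a normal surface.

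The algorithm outputs YES precisely when, for every Jones slope, at least one target pair produces a candidate passing the essentiality test. I expect the main obstacle to be the bounding argument in the previous paragraph: one must show that if a Jones surface with the prescribed numerical invariants exists at all, then one of bounded normal complexity also exists, so the candidate list is honestly finite. Proposition~\ref{divide} supplies the arithmetic side of this bound, but ruling out arbitrarily complicated Haken sums requires combining it with normal surface compression techniques that trade topological complexity for normal complexity without destroying essentiality.
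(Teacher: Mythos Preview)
Your proposal correctly isolates the problem as a finite collection of existence questions for essential normal surfaces with prescribed slope and ratio $\chi/(|\partial|\,b)$, and you are right that the ratio condition is a single homogeneous linear equation on normal coordinates, so the relevant solutions form a rational sub-cone. But the heart of the argument---the ``bounding argument'' you yourself flag as the main obstacle---is genuinely missing, and what you sketch does not close it.

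First, Proposition~\ref{divide} does not supply an upper bound on $|\partial S|$ or on the normal complexity of $S$. Combining $b|\partial S|\mid 2p^{2}\chi(S)$ with $\chi(S)/(|\partial S|\,b)=\tau/2$ only yields $p^{2}\tau\in\ZZ$; this constrains which ratios $\tau$ are admissible, not how large a surface realising a given $\tau$ can be. Your observation that $|\partial S|$ must be a multiple of some $n_0$ is a lower bound, not an upper one: if $S$ is a Jones surface then so is the Haken sum $S\oplus\cdots\oplus S$, with the same ratio and arbitrarily large $|\partial|$. A Carath\'eodory argument bounds the \emph{number} of summands in a conical representation, not the \emph{coefficients}, so by itself it does not produce a finite candidate list.

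Second, and more seriously, even given a finite list of normal surfaces with the correct slope and ratio, you still need: \emph{if any essential surface with these numerics exists, then one on the list is essential}. Passing from a Haken sum to a sub-sum does not in general preserve essentiality, and ``compression techniques'' is not a substitute for an argument. The paper closes this gap by working in a one-vertex, $0$-efficient triangulation with the meridian realised as a single edge, taking $S$ to be a \emph{minimal} normal representative, and invoking Matveev's theorem that every Haken summand of such a minimal essential surface is itself essential. With that in hand, the reduction is: decompose $S$ into essential fundamentals $\Sigma_i$ (with boundary, all of slope $a/b$) and essential closed $F_j$; either some $\Sigma_i$ already satisfies $x(\Sigma_i):=2p^{2}\chi(\Sigma_i)-\lambda b|\partial\Sigma_i|=0$ and is a fundamental Jones surface, or else the multiplicity vector $(n_i,m_j)$ solves the single homogeneous equation $\sum_i x(\Sigma_i)n_i+2p^{2}\sum_j\chi(F_j)m_j=0$. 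The fundamental non-negative integer solutions of this equation are finite and algorithmically enumerable, and by the same essentiality-of-summands argument each admissible one yields an essential sub-sum of $S$ with the correct ratio. That two-step reduction---fundamental surfaces, then fundamental solutions of an auxiliary linear equation, with Matveev's result guaranteeing essentiality at each stage---is precisely the missing idea your last paragraph gestures toward but does not supply.
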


\begin{proof}  There is an algorithm to determine whether  $M_K=S^3\setminus n(K)$ is a solid torus and thus  if $K$ is the unknot \cite{Haken, JacoTo}.
If $K$ is the unknot then the Strong Slope Conjecture is known and we are done.

If $K$ is not the unknot then we can obtain a triangulation  $\mathcal T_1$ of the complement $M_K$ together with a meridian of $M_K$ that is expressed as a path that follows edges of  $\mathcal T_1$ on $\partial M_K$. A process for getting this triangulation is given in \cite[Lemma 7.2]{HLP}.
Apply the algorithm of Jaco and Rubinstein \cite[Proposition 5.15 and Theorem 5.20]{JR} to convert $\mathcal T_1$ to a triangulation $\mathcal T$ that has a single vertex (a  one-vertex triangulation) and  contains  no normal embedded $2$-spheres.  The algorithm ensures that the only vertex of the triangulation lies on $\partial M_K$.
Then we can apply the process known as ``layering" a triangulation to alter the edges on $\partial M_K$ till the meridian becomes a single edge in the triangulation
(see  \cite{JacoDecisionProblems}). We will continue to denote this last triangulation by $\mathcal T$ and we will use $\mu$ to denote the single edge corresponding to the meridian of $K$.

For notational simplicity we will work with $js_K$ and  $jx_K$ as the argument for $js^{*}_K$  and $ jx^{*}_K$ is completely analogous. 
Fix a Jones slope  $a/b\in js_K$, with $b>0$ and $\gcd(a, b)=1$, and suppose that we have Jones surfaces corresponding to it.
Let $S$ be such a surface   with
$\displaystyle{ \beta:=\frac{\chi(S)}{{\abs{\partial S} b}} \in jx_K.}$  By Proposition \ref{divide},  $\abs{\partial S} b$ divides $2p^2 \chi(S)$, where $p$ is the Jones period of $K$.
Thus \begin{equation}
2p^2 \chi(S)-\lambda  \abs{\partial S} b=0 \ \  {\rm where} \ \    \lambda=2p^2\beta  \in \ZZ.
\label{JS}
\end{equation}

\smallskip

 \begin{lemma}\label{fundamental} Suppose that $S$ is a Jones surface with boundary slope $s:=a/b\in js_K$, where $K$ has Jones period $p$, and  with
$\beta=\displaystyle{ \frac{\chi(S)}{{\abs{\partial S} b}} \in jx_K}$ and $\lambda$ as defined in \eqref{JS}. 
Then, exactly one of the following is true:
\begin{enumerate}
\item There is a Jones surface $\Sigma$   with boundary slope $a/b\in js_K$ and with
\begin{equation}
 \frac{\chi(\Sigma)}{{\abs{\partial \Sigma} b}}=\beta= \frac{\lambda}{2p^2},
 \label{JS1}
 \end{equation}
that is also a  normal fundamental surface with respect to  $\mathcal T$.

\item There is a nonempty set ${\mathcal EZ'}_s$ of
essential surfaces that are normal fundamental surfaces with respect to  $\mathcal T$, 
have boundary slope $s$ and for every $\Sigma\in{\mathcal EZ'}_s$ we have

$$2p^2\chi(\Sigma)- \lambda \abs{\partial {\Sigma}}b\neq 0 \ \  {\rm and} \ \  \abs{\partial {\Sigma}}\leq \abs{\partial {S}}.  $$

\end{enumerate}
\end{lemma}

 \begin{proof} 
Let $S$ be a Jones surface as above. Any essential surface in $M_K$ may be isotoped to a normal surface with respect to above fixed  $\mathcal T$.
 Moreover, this normal surface $S$ may be be taken to be minimal in the sense of \cite[Definition 4.1.6]{MatveevAlgorithmicBook}: This means that the surface minimizes the
number of intersections with the 1-skeleton  $\mathcal T^1$ of $\mathcal T$ in the (normal) isotopy class of the surface.  The significance of this  minimality condition is the following: By
\cite[Corollary 4.1.37]{MatveevAlgorithmicBook}, applied to $ (M_K, \mu)$, if $S$ can be written as a Haken sum of non empty normal surfaces then each of these summands  
is essential in $M_K$. See also \cite[Theorem 5.1]{JacoDecisionProblems}.

Suppose that $S$ is not fundamental. Then $S$ can be represented as a \emph{Haken sum} 
\begin{equation}
S = \Sigma_1 \oplus \hdots \oplus \Sigma_n \oplus F_1 \oplus \hdots \oplus F_k, \label{Haken}
\end{equation}
where each $\Sigma_i$ is a fundamental normal surface with boundary, and each $F_i$ is a closed fundamental normal surface. By Jaco and Sedgwick
 \cite[Proposition 3.7 and Corollary 3.8]{JacoDecisionProblems} each $\Sigma_i$ has the same  boundary slope as $S$.  As said earlier we have

\begin{equation}
\chi(S)= \chi(\Sigma_1) + \hdots +\chi(\Sigma_n) + \chi( F_1) + \hdots + \chi(F_k).
\label{ki}
\end{equation}

Recall that the number of sheets of  a surface $S $, that is properly embedded in $M_K$,  is the number of  intersections of $\partial S$ with the edge $\mu$. We also recall that the boundary of a Haken sum $S_1\oplus S_2$  is obtained  by resolving  the double points in $\partial S_1\cap \partial S_2$ so that the resulting curves are still normal.
In particular, the homology class of $\partial(S_1\oplus S_2)$ is the sum of the homology classes of $\partial S_1$ and $\partial S_2$ in $H_1(\partial M_K)$. This implies that the number of intersections of
 $\partial(S_1\oplus S_2)$  with $\mu$ is the sum of the numbers of intersection of   $\partial S_1$ and $\partial S_2$   with $\mu$. Thus by (\ref{Haken}) we obtain

\begin{equation}
\abs{\partial S} b= \abs{\partial {\Sigma_1}}b + \hdots +\abs{\partial {\Sigma_n}}b.
\label{bdry}
\end{equation}
As said above, \cite[Corollary 4.1.37]{MatveevAlgorithmicBook}  shows that $\Sigma_i$ must be essential, for all $i=1,\ldots, n$.

If for some $i$ we have $2p^2\chi(\Sigma_i)- \lambda \abs{\partial {\Sigma_i}}b=0,$ then $\Sigma:=\Sigma_i$ is a Jones surface as claimed in (1) in the statement of the lemma. Otherwise we 
have \begin{equation}
2p^2\chi(\Sigma_i)- \lambda \abs{\partial {\Sigma_i}}b\neq 0,
\label{positive}
\end{equation}
for all $1\leq i\leq n$. It follows immediately by equation (\ref{bdry}) that  $ \abs{\partial {\Sigma_i}}\leq  \abs{\partial {S}}$ and hence option (2) is satisfied. \end{proof}

\vskip 0.03in

To continue suppose that there exist Jones surfaces  $S$, with boundary slope $s:=a/b$, with $\displaystyle{\beta =\frac{\chi(S)}{|\partial S|b}}$, and $\lambda$ defined as in (\ref{JS}),
but there are no such surfaces  that are fundamental with respect to $\mathcal T$. Then, by Lemma  \ref{fundamental},
 we have a set ${\mathcal EZ'}_s\neq \emptyset$ of properly embedded essential surfaces in $M_K$ such that for every $\Sigma_i \in {\mathcal EZ'}_s$ we have:
\begin{itemize}
\item $\Sigma_i$ has boundary slope $s$ and  is  a  normal fundamental surface with respect to  $\mathcal T$; and

\item we have $2p^2\chi(\Sigma_i)- \lambda \abs{\partial {\Sigma_i}}b\neq 0.$
\end{itemize}

By the proof of Lemma  \ref{fundamental}, a Jones surface $S$ as above  is a Haken sum of essential fundamental  surfaces 

\begin{equation}
 S= (\oplus_{i} n_i \Sigma_i) \oplus \left( \oplus_{j}m_j F_j \right),
 \label{normalsum}
 \end{equation}
where $\Sigma_i\in {\mathcal  EF}_s$,  the $F_j$'s are closed surfaces and $n_i, m_j\geq 0$ are integers.
We have
$$\chi(S)=\sum_{i}\chi(\Sigma_i) n_i  + \sum_{j} \chi(F_j) m_j,$$
and

$$\abs{\partial S} b= \sum_{i}\abs{\partial {\Sigma_i}}b   n_i.$$

Multiplying the first equation by $2p^2$, the second by $\lambda$  and subtracting we obtain
\begin{equation}
\sum_{i}x(\Sigma_i) n_i  + 2p^2\sum_{j} \chi(F_j) m_j=0
\label{homozero}
\end{equation}

where $$x(\Sigma_i):=2p^2\chi(\Sigma_i)- \lambda \abs{\partial {\Sigma_i}}b\neq 0.$$

Thus the vector  ${\bf n:}=(n_1, \ldots,  m_1,\ldots )$ corresponds  to a solution of the homogeneous equation 
(\ref{homozero}), with non-negative integral entries.  We recall that a solution vector  ${\bf n}$ with non-negative integer entries, for equation (\ref{homozero}), is called fundamental if it cannot be written as a non-trivial sum of solution vectors with non-negative integer entries. 
For any system of linear homogeneous equations, there is a finite number of fundamental solutions that can be found algorithmically, 
and every solution is linear combination of fundamental ones (see, for example, \cite[Theorem 3.2.8]{MatveevAlgorithmicBook}).

\begin{lemma} \label{fundamental2} Suppose that there is a Jones surface $S$
corresponding to boundary slope $s$ which satisfies equation (\ref{JS}). Suppose moreover that there are no normal fundamental surfaces with respect to $\mathcal T$ that
are Jones surfaces satisfying (\ref{JS1}). Then,
there is a Jones surface  $\Sigma',$  with boundary slope $s$ and  $\displaystyle{\frac{\chi(\Sigma')}{{\abs{\partial \Sigma'} b}}=\beta= \frac{\lambda}{2p^2}}$, such that 
$$\Sigma'= \oplus_{i} k_i \Sigma'_i \oplus \left( \oplus_{j}l_j F_j \right),$$
where ${\bf k}=(k_1,\ldots, l_1, \ldots, )$ is a fundamental solution of equation (\ref{homozero}).
\end{lemma}
\begin{proof} By assumption we have a Jones surface $S$ of the form shown  in equation (\ref{normalsum}) corresponding to a solution ${\bf n}$ with non-negative integer entries of equation (\ref{homozero}).
If ${\bf n}$  is not fundamental, then ${\bf n}= {\bf k}+{\bf m} $
where ${\bf k}$ is fundamental and ${\bf m} $ a non-trivial solution with non-negative integer entries of equation (\ref{homozero}),  corresponding to normal surfaces $\Sigma'$ and $\Sigma''$ via equation
(\ref{normalsum}). We have $S=\Sigma'\oplus \Sigma''$.
In order for $\Sigma'$ to be a Jones surface it is enough to see that $\Sigma'$ is essential. But this follows by \cite[Corollary 4.1.37]{MatveevAlgorithmicBook} as noted earlier.
\end{proof}

Now we are ready to present our algorithm and  finish the proof of the theorem: Given a knot $K$ with known sets $js_K, js^{*}_K, jx_K,jx^{*}_K$, and known Jones period $p$, to check whether it satisfies the Strong Slope Conjecture we need to
check that the elements in $js_K\cup   js^{*}_K$  are boundary slopes and to find Jones surfaces for all these slopes.
To use Lemma \ref{fundamental},   we need to know the 
 fundamental
normal surfaces with respect to the triangulation $\mathcal T$ fixed in the beginning of the proof. There are  finitely many fundamental surfaces in $M_K$ and there  is an algorithm to  find them \cite{Haken}.

\subsection*{Algorithm for finding Jones surfaces.} Let  $\mathcal{Z}=\{ Z_1, \ldots, Z_k\}$ denote the list of all
fundamental surfaces. There is an algorithm to compute $\chi(Z)$ for all surfaces $Z \in \mathcal{Z},$ and  to compute the boundary slopes of the ones with boundary \cite{JacoTo}. 
Let $$\mathcal{A}=\displaystyle{\{  {a_1}/{b_1}, \ldots, {a_s}/{b_s}\}}$$ denote the list of distinct finite boundary slopes of the surfaces in $\mathcal{Z}$, where 
$(a_i, b_i)=1$ and $b_i>0$. Now proceed as follows:
\begin{enumerate}
\item  Check whether  $js_K \subset \mathcal{A}$  and    $js^{*}_K \subset \mathcal{A}$. If one of the two inclusions fails then $K$ does not satisfy the Slope Conjecture.

\item If $ \mathcal{Z}$  contains no closed surfaces move to the next step. If we have closed surfaces we need to find any incompressible ones among them.
 There is an algorithm that decides whether a given 2-sided surface is incompressible and boundary incompressible if the surface has boundary. See \cite[Theorem 4.1.15]{MatveevAlgorithmicBook}, \cite[Theorem 4.1.19]{MatveevAlgorithmicBook}, or
 \cite[Algorithm 3]{Burtonb}.
Apply the algorithm to each closed surface in $ \mathcal{Z}$  to decide whether  they are
incompressible. Let ${\mathcal C}\subset \mathcal{Z}$  denote the set of incompressible  surfaces found, that have genus bigger than one.

\item For every $s:={a}/{b} \in js_K\subset  \mathcal{A}$ consider the set ${\mathcal{Z}}_s\subset \mathcal{Z}$ that have boundary slope  ${a}/{b}$. 
By \cite{JacoDecisionProblems} we know that ${\mathcal{Z}}_s\neq \emptyset$. Decide whether ${\mathcal{Z}}_s$  contains essential surfaces and find them.
 Note that the surfaces  in  $\mathcal{Z}_s$ may not be 2-sided. To decide that an 1-sided surface is essential one applies the incompressibility and $\partial$-incompressibility algorithm to the double of the surface.
 Let $\mathcal{EZ}_s$ denote the set of essential surfaces found. If $\mathcal{EZ}_s=\emptyset$ then  $K$ fails the conjecture.

\item 
 For every $\lambda \in {2p^2 jx_K}$ and every $\Sigma\in {\mathcal{EZ}}_s$
 calculate the quantity $$x(\Sigma):= 2p^2\chi(\Sigma) -\lambda b \abs{\partial \Sigma}.$$
 Suppose that there is $\Sigma\in {\mathcal{EZ}}_s$ with  $x(\Sigma)=0$. Then  any such $\Sigma$ is a Jones surface corresponding to $s$.

 \item
Suppose ${{\mathcal EZ'}_s}:= \{ \Sigma_1, \ldots, \Sigma_r \} \neq\emptyset$ and that we have $x(\Sigma)\neq 0$,   for all $\Sigma\in {\mathcal{EZ}'}_s$.    Then consider equation (\ref{homozero}) 
 $$x(\Sigma_1) n_1+\ldots + x(\Sigma_r) n_r+2p^2 \chi(C_1) m_1+\ldots +  2p^2 \chi(C_t) m_t=0,$$
 where $C_i$ runs over all the surfaces in  ${\mathcal C}$. Find and enumerate all the fundamental solutions $\Sigma'$ of the equation as in Lemma \ref{fundamental2}. Among these solutions pick the {\emph {admissible}} ones: That is 
 solutions for which, for any incompatible  pair of surfaces in   ${\mathcal C}\cup {\mathcal EZ}'_s$, at most one of the corresponding entries in the solution
 should be non-zero. Hence pairs of non-zero numbers correspond to pairs of compatible surfaces.
 Every admissible fundamental solution represents a normal surface. By Lemma \ref{fundamental2}, we need only to check if one of these surfaces is essential.  If a surface in this set is essential, then it is a Jones surface, otherwise, $K$ fails the Strong Slope Conjecture.

  \item For every   $a/b\in js_K\subset  \mathcal{A}$  repeat steps (3)-(5) above and run the analogous process for the Jones slopes in $ js_K^{*}$.
 \end{enumerate}
\end{proof}

The next Corollary gives conditions where Jones surfaces can be chosen to be fundamental.
\begin{corollary}\label{funda} Suppose that $S$ is a Jones surface with boundary slope $s:=a/b\in  js_K\cup js_K^{*}$. Suppose moreover that $S$ is a spanning surface of $K$ (i.e. ${{\abs{\partial S} b}} =1$)
that has maximal Euler characteristic over all spanning surfaces of $K$ with boundary slope $s$. Then there is a Jones surface $\Sigma_1$ corresponding to $s$ 
that is also a normal fundamental surface with respect to  $\mathcal T$.

\end{corollary}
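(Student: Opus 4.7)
The plan is to normalize $S$ with respect to the triangulation $\mathcal T$ fixed in the proof of Theorem \ref{mainjones } and to analyze the resulting Haken decomposition exactly as in Lemma \ref{fundamental}. If $S$ is already fundamental, set $\Sigma_1:=S$ and we are done, so assume otherwise. Using the minimality argument invoking \cite[Corollary 4.1.37]{MatveevAlgorithmicBook}, we may take $S$ to have a Haken decomposition
$$
S \;=\; \Sigma_1 \oplus \cdots \oplus \Sigma_n \oplus F_1 \oplus \cdots \oplus F_k
$$
into essential fundamental summands, with each $\Sigma_i$ having non-empty boundary and each $F_j$ closed. By \cite[Proposition 3.7]{JacoDecisionProblems} every $\Sigma_i$ inherits the boundary slope $s$ of $S$.

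The spanning hypothesis $\abs{\partial S}\,b = 1$ drives the argument. Since the boundary of a Haken sum is the homological sum of the boundaries of its summands, we have
$$
\sum_{i=1}^{n} \abs{\partial \Sigma_i}\,b \;=\; \abs{\partial S}\,b \;=\; 1,
$$
which forces $n=1$ and $\abs{\partial \Sigma_1}\,b = 1$. In particular $\Sigma_1$ is itself an essential spanning surface of $K$ with boundary slope $s$.

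It remains to show $\chi(\Sigma_1) = \chi(S)$. Additivity of Euler characteristic under Haken sum gives $\chi(\Sigma_1) = \chi(S) - \sum_{j=1}^{k}\chi(F_j)$. Each $F_j$ is a closed essential surface in $M_K$, and since $\mathcal T$ contains no normal $2$-sphere by construction, $F_j$ is either a torus (with $\chi(F_j)=0$) or has genus at least two (with $\chi(F_j)<0$). If some $F_j$ had $\chi(F_j)<0$, then $\chi(\Sigma_1) > \chi(S)$, contradicting the assumption that $S$ maximizes Euler characteristic among essential spanning surfaces of $K$ with boundary slope $s$. Hence every $F_j$ is a torus, $\chi(\Sigma_1) = \chi(S)$, and so
$$
\frac{\chi(\Sigma_1)}{\abs{\partial \Sigma_1}\,b} \;=\; \frac{\chi(S)}{\abs{\partial S}\,b},
$$
which lies in $jx_K$ (respectively $jx_K^{*}$). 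Therefore $\Sigma_1$ is a Jones surface for $s$ that is fundamental with respect to $\mathcal T$. The main obstacle is this Euler-characteristic step: without the maximality hypothesis one could not rule out essential higher-genus closed summands, which would push $\chi(\Sigma_1)/(\abs{\partial \Sigma_1}\,b)$ outside $jx_K$ and destroy the Jones-surface property.
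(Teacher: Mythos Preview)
Your proof is correct and follows essentially the same route as the paper's: normalize $S$, take its Haken decomposition into essential fundamental pieces, use the spanning hypothesis $\abs{\partial S}b=1$ together with additivity of boundary sheets to force $n=1$, and then invoke maximality of $\chi(S)$ to rule out closed summands of negative Euler characteristic. The only cosmetic difference is that you justify $\chi(F_j)\le 0$ via the absence of normal $2$-spheres in $\mathcal T$, whereas the paper appeals directly to incompressibility of the $F_j$.
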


\begin{proof} Consider a {Haken sum} decomposition of $S$ as in equation (\ref{Haken}) in the proof of Lemma \ref{fundamental}. Since
 $b \abs{\partial {\Sigma_i}}\leq  b\abs{\partial {S}}=1$, we obtain that  $b \abs{\partial {\Sigma_i}}=1$ and by equation (\ref{bdry}) we have $n=1$. Thus
\begin{equation}
S = \Sigma_1 \oplus F_1 \oplus \hdots \oplus F_k, 
\end{equation}

\noindent where $\Sigma_1 $ is a fundamental essential spanning surface of slope $s$ and each $F_i$ is a closed fundamental, incompressible, normal surface with
$ \chi( F_i)\leq 0$. If we have $ \chi( F_i)\neq 0$, for some $1\leq i\leq k$, then $\chi(S)<  \chi(\Sigma_1) $. Since the latter inequality contradicts the assumption that 
$S$ has  maximal Euler characteristic over all spanning surfaces of $K$ with boundary slope $s$, it follows that $ \chi( F_i)=0$ for all $1\leq i\leq k$.
Thus $\chi(S)= \chi(\Sigma_1)$ and, since $b \abs{\partial {\Sigma_1}}=1$ and $\Sigma_1 $ has boundary slope $s$, it follows that $\Sigma_1 $ is a Jones surface.
\end{proof}

Corollary \ref{funda} applies to positive closed braids: Let $B_n$ denote the braid group on $n$ strings, and let $\sigma_1, \cdots,
\sigma_{n-1}$ be the elementary braid generators.  Let $D_b$ denote the closed braid diagram
 obtained from the braid
$b=\sigma_{i_1}^{r_1}\sigma_{i_2}^{r_2} \cdots \sigma_{i_k}^{r_k}$. If $r_j > 0$ for all $j$, the positive braid diagram $D_b$ will be $A$--adequate.  Let $K$ denote the knot represented by $D_b$.
By \cite[Example 9]{FKP}, 
$ js^{*}_K=\{0\}$ and an essential surface $S_A$ that realizes this Jones slope is a fiber in  $S^3 \setminus K$ (thus an orientable Seifert surface of maximal Euler characteristic).
By \cite[Theorem 3.9]{Effie-Anh-slope} and its proof, $S_A$ is a Jones surface of $K$ corresponding to slope zero. Thus the hypotheses of 
Corollary \ref{funda} are satisfied and in this case we can find a Jones surface that is  fundamental with respect to  $\mathcal T$.

At this writing we do not know if there are examples of knots with Jones slopes that do not admit Jones surfaces that are  fundamental with respect to  $\mathcal T$.
In other words we do not know if there are examples  of Jones slopes where step (5) of above given algorithm is needed in order to find the corresponding Jones surfaces.
We ask the following.

\begin{question} \label{five}Is there a knot $K$ that satisfies the Strong Slope Conjecture  and such that there  a Jones slope $s\in  js_K\cup js_K^{*}$ for which we cannot find a Jones surface
that is a normal fundamental surface with respect to  $\mathcal T$?

\end{question}

\begin{remark} Suppose that Question \ref{lessthanp} has an affirmative answer: That is for every Jones slope $s:=a/b$ there is a Jones surface $S$, with 
$\chi(S)= \beta \abs{\partial S} b$, for some $\beta$ such that $\beta$ or $-\beta \in jx_K\cup jx_K^{*}$, that is characteristic (i.e. $\abs{\partial S} b$ divides  the period $p$).
Thus $\abs{\partial S} b\leq p$ and  
\begin{equation}
-\chi(S)+ \abs{\partial S} b\leq  (1-\beta)p.
\label{bounded}
\end{equation}
Now \cite[Theorem 6.3.17]{MatveevAlgorithmicBook}, applied to $(M_K, \mu)$ implies that there are finitely many essential surfaces in $M_K$ that satisfy (\ref{bounded}) and they can be found algorithmically.
Using  this observation, one can see that a positive answer to Question \ref{lessthanp} will lead to an alternative algorithm for finding Jones surfaces than the one outlined above.
\end{remark}


\bibliographystyle{plain} \bibliography{biblio}
\end{document}